\newtheorem{theorem}{Theorem}
\newtheorem{lemma}{Lemma}
\newtheorem{conj}{Problem}
\newtheorem{defini}{Definition}
\title[Li-Yorke chaos and positive topological sequence entropy in NDS]{Relationship between Li-Yorke chaos and positive topological sequence entropy in nonautonomous dynamical systems}
\author[J. \v{S}otola]{Jakub \v{S}otola}
\address{Jakub \v{S}otola, Mathematical Institute, Silesian University, Na Rybn\'{i}\v{c}ku 1, 746 01, Opava, Czech Republic}
\email{Jakub.Sotola@math.slu.cz}
\begin{document}
\maketitle

\section*{Abstract}
We study chaotic properties of uniformly convergent nonautonomous dynamical systems. We show that, contrary to the autonomous systems on the compact interval, positivity of topological sequence entropy and occurrence of Li-Yorke chaos are not equivalent, more precisely, neither of the two possible implications is true.

\section{Introduction}
Among other notions, Li-Yorke chaos and topological entropy belong to basic and widely used concepts in the theory of discrete dynamical systems. The question of their mutual relationship is thus very natural. Since 2002, it is known (see \cite{BGKM}) that for continuous mappings of compact metric spaces positive topological entropy implies Li-Yorke chaos. Analogical implication between positive topological entropy and distributional chaos of the second type has been shown by Downarowicz, 2014, \cite{Downar}. Note that, in both cases, the converse implications are not true (see \cite{Smit} and \cite{FPS}, respectively). Hence a natural question arises, whether there is a property connected to positiveness of topological entropy is equivalent to the occurrence of Li-Yorke chaos. It has been solved in \cite{FS} by Franzov\'{a} and Sm\'{i}tal for maps on the compact interval.

\begin{theorem}
\label{FSthm}
A dynamical system $(I,f)$ where $I$ is a compact interval and $f$ a continuous mapping from $I$ to itself, is Li-Yorke chaotic if and only if there exists a sequence $S$ such that the topological sequence entropy $h_S(f) > 0$.
\end{theorem}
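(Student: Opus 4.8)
The plan is to establish the two implications separately, reducing each to the regime of zero topological entropy, where the genuine content lies. On the positive-entropy side everything is immediate: if $h_{\mathrm{top}}(f)>0$ then $f$ is Li-Yorke chaotic by the theorem of \cite{BGKM} (or, for the interval, by a direct horseshoe argument), and simultaneously $h_{S}(f)\ge h_{\mathrm{top}}(f)>0$ for $S=(1,2,3,\dots)$, so the equivalence holds in this case. Hence from now on I assume $h_{\mathrm{top}}(f)=0$, so that $f$ is of type $2^{\infty}$ and its dynamics is organized by a nested sequence of periodic intervals $J_{1}\supset J_{2}\supset\cdots$, where $J_{k},f(J_{k}),\dots,f^{2^{k}-1}(J_{k})$ are pairwise disjoint and $f^{2^{k}}(J_{k})\subseteq J_{k}$; the relevant structure theory of such maps is that of \cite{Smit}.

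For the implication ``Li-Yorke chaotic $\Rightarrow$ $h_{S}(f)>0$'': by the structure theorem of \cite{Smit}, a chaotic map of type $2^{\infty}$ must possess an infinite $\omega$-limit set $\omega_{f}(x)$ that is \emph{not} minimal, i.e.\ strictly contains another $\omega$-limit set, and, more precisely, at infinitely many levels $k$ the return map $f^{2^{k}}$ exhibits genuine spreading of orbits of $\omega_{f}(x)\cap J_{k}$. Exploiting this I would extract a \emph{sequential horseshoe}: two disjoint compact intervals $K_{0},K_{1}$ with $\varepsilon:=\operatorname{dist}(K_{0},K_{1})>0$ and an increasing sequence $S=(n_{1}<n_{2}<\cdots)$, obtained by diagonalizing over the levels $k$ and choosing the $n_{j}$ among suitable multiples of $2^{k}$, such that every itinerary $\alpha\in\{0,1\}^{\mathbb{N}}$ is realized, namely there is $y_{\alpha}$ with $f^{n_{j}}(y_{\alpha})\in K_{\alpha_{j}}$ for all $j$. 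Counting the $2^{m}$ distinct prefixes of length $m$ then produces at least $2^{m}$ sets in $\bigvee_{j=1}^{m}f^{-n_{j}}\mathcal{U}$, where $\mathcal{U}$ is an open cover separating $K_{0}$ from $K_{1}$, whence $h_{S}(f)\ge\log 2>0$.

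For the converse I would argue by contraposition and show that a non-chaotic interval map is \emph{null}, that is, $h_{S}(f)=0$ for \emph{every} sequence $S$. Here one uses the rigidity supplied by \cite{Smit}: $h_{\mathrm{top}}(f)=0$, every infinite $\omega$-limit set is a minimal solenoidal Cantor set on which $f$ acts as an adding machine, and every point is approximately periodic; moreover, by the two-point theorem of Kuchta and Sm\'{i}tal, a non-chaotic interval map has no Li-Yorke pair whatsoever. From this one obtains, for each $\varepsilon>0$, a finite family of cycles such that every orbit eventually stays within $\varepsilon$ of the orbit of one of them, and hence a sub-exponential (in fact polynomial) bound, uniform in $S$, on the number of $(n,\varepsilon)$-separated orbit segments of length $n$ along $S$; letting $n\to\infty$ gives $h_{S}(f)=0$.

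I expect the main obstacle to be the construction of the sequence $S$ in the zero-entropy chaotic case: one must convert the purely qualitative non-minimality and spreading of orbits inside the periodic intervals into a genuine full $2^{\mathbb{N}}$-branching phenomenon realized along one fixed sequence, which calls for a nested Cantor-scheme selection of return times and sub-intervals, invoking the intermediate value theorem at each step to keep both symbols attainable and the selected pieces nondegenerate. The rigidity estimate on the other side is less delicate, being close to a direct consequence of the known structure theory, but it still requires care to make the sub-exponential bound uniform over all sequences $S$ simultaneously rather than for one $S$ at a time.
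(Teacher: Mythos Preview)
The paper does not prove Theorem~\ref{FSthm}; it is quoted from \cite{FS} as a known result, serving only as background and motivation for the paper's own (quite different) work on nonautonomous systems. There is therefore no proof in the paper against which to compare your proposal.

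For what it is worth, your outline---reduction to the zero-entropy case, extraction of a sequential $2$-horseshoe from the structure theory of type-$2^\infty$ chaotic maps for one implication, and a nullness argument from approximate periodicity for the other---is broadly in the spirit of the original Franzov\'a--Sm\'{i}tal argument, and you have correctly identified the delicate step (building a single sequence $S$ along which a full $\{0,1\}^{\mathbb{N}}$ itinerary set is realized). But any substantive assessment of correctness would have to be made against \cite{FS} itself, not the present paper.
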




The concept of a nonautonomous dynamical system (NDS) is a natural generalization of the well known autonomous case. The trajectory of a point in NDS is given by successive application of maps from a given sequence (instead of a single map in the autonomous case). It can be used to introduce e.g. seasonality to the dynamical system or other type of perpetual changes (see e.g., \cite{Dengue, China}). The first paper in the field of nonautonomous discrete dynamical systems is due to Kolyada and Snoha, \cite{KS}. In the paper, the authors introduce the notion of topological entropy for nonautonomous systems using open covers (analogy to the definition for autonomous systems, given in \cite{TE1}) and they show that it is, similarly to the autonomous case, equivalent to the definition using $(n,\varepsilon)$-separated sets (for autonomous case, see \cite{TE2}). They also prove many properties of topological entropy of nonautonomous dynamical systems, mainly convergent and equicontinuous ones. Among all the properties proven in \cite{KS}, let us state at least the commutativity of topological entropy $h(f\circ g) = h(g\circ f)$.

The notion of Li-Yorke chaos for nonautonomous systems was studied, e.g. by \v{S}tef\'{a}nkov\'{a}, \cite{Stef}, who proved that a uniformly convergent nonautonomous system inherits Li-Yorke chaos from its limit system, or by C\'{a}novas in \cite{Cano1}.

The focus of this paper is to prove or disprove a possible generalization of Theorem \ref{FSthm} for the case of uniformly convergent non-autonmous dynamical systems on the interval. It was shown that by a simple modification of an example from the proof of Theorem 1 by Dvo\v{r}\'{a}kov\'{a}, \cite{Dvor}, we obtain a counterexample to one of the implications (see \cite{Sot}). The construction of the second counterexample is the main result of this paper.

\section{Preliminaries}

Let us now sum up some necessary definitions and notions. Unless stated otherwise by $X$ we mean a compact metric space equipped with the metric $\varrho$ and by $I$ the unit compact interval $[0,1]$. By $C(X)$ we mean a collection of all continuous maps from $X$ to itself and by $f$ we mean a map from this collection.

By $f_{1,\infty}$ we denote a sequence of surjective mappings from $C(X)$ (or $C(I)$). Unless stated otherwise we will assume that the sequence $f_{1,\infty}$ converges uniformly to a limit mapping $f$. The surjectivity assumption is necessary since a single constant map may ``destroy'' the dynamics of the NDS, e.g., if $f_1 = 0$ and, for any $i\geq2$, $f_i$ are the tent maps (i.e, $f\!\!: I\rightarrow I, f(x) = 1-|1-2x|$), then there is a single constant trajectory, while if we drop the first map we get Li-Yorke chaotic (autonomous) dynamical system. Moreover, with the surjectivity assumption we have the following implication  for the following implication $h(f) > 0 \Rightarrow h(f_{1,\infty}) > 0$, \cite{Cano1}, and without it we get $h(f_{1,\infty}) \geq h(f)$, \cite{KS}.

For $f\in C(X)$ and $x\in X$ we denote by $f^n(x)$ the $n$-th iteration of $x$ under $f$. We generalize this notion to NDS in the following way:
\begin{eqnarray*}
&& f^0_i(x) := x\\
&& f^1_i(x) := f_i(x)\\
&& f^n_i(x) := (f_{i+n-1} \circ f^{n-1}_i)(x)
\end{eqnarray*}
Note that we usually use $f^n_1(x)$, but sometimes it is useful to start with the $i$th function from $f_{1,\infty}$, instead.

The pair $(X,f)$ is called {\sl autonomous discrete dynamical system} (ADS for short) and the pair $(X,f_{1,\infty})$ is called {\sl nonautonomous discrete dynamical system} (NDS). We recall that $X$ is a compact metric space and $f$ and all members of $f_{1,\infty}$ belong to $C(X)$. The set of all iterations of a point is called {\sl forward orbit} 
By a {\sl trajectory} we mean the sequence $\{f^n(x)\}_{n=1}^\infty$ and similarly for NDS. By an {\sl $\omega$-limit set of a point $x$} we mean the set of the limit points of the orbit of the point. These definitions can be easily extended, e.g., from the orbit of a point to the orbit of a set etc.

Let us now define a generalization of the notion of Li-Yorke chaos for NDS.

\begin{defini}
Let $(X,f_{1,\infty})$ be an NDS and $\varrho$ the corresponding metric on $X$. Two distinct points $x,\ y \in X$ form a {\sl Li-Yorke pair} (shortly {\sl LY-pair}) if and only if it holds:
\begin{eqnarray*}
& \limsup_{n \rightarrow \infty} \varrho(f_1^n(x),f_1^n(y)) > 0\\
& \liminf_{n \rightarrow \infty} \varrho(f_1^n(x),f_1^n(y)) = 0
\end{eqnarray*}
A set where arbitrary two distinct points form a LY-pair is called {\sl Li-Yorke scrambled set}.
A NDS $(X,f_{1,\infty})$ is called {\sl Li-Yorke chaotic} (LYC), if $X$ contains an uncountable Li-Yorke scrambled set.
\end{defini}

Obviously, if $f_n = f$ for every $n\in\mathbb{N}$, we obtain the original definition of LYC for ADS $(X,f)$.

The notion of topological entropy was introduced by Adler, Konheim and MacAndrew, \cite{TE1}, and later equivalently redefined by Bowen, \cite{TE2}. The topological sequence entropy (for ADS) was introduced by Goodman, \cite{Good}. We will state here the definition of topological sequence entropy for NDS in Bowen form since it is more appropriate for our purposes and it was first stated by Kolyada and Snoha in \cite{KS}.

\begin{defini}
Let $X$ be a compact metric space with metric $\varrho$ 
and $f_{1,\infty}$ a~sequence of maps from  $C(X)$. 
Let $A = \{a_i\}_{i=1}^\infty$ be an~increasing sequence of positive integers. For each positive integer $n$ we put $$\varrho_n^A(x,y) := \max_{0 \leq j \leq n-1} \varrho(f^{a_j}_1(x), f^{a_j}_1(y)).$$ A subset $E$ of the space $X$ is then called {\sl $(n,\varepsilon,A)$-separated} if for any two distinct points $x$ and $y$ from $E$ it holds $\varrho_n^A(x,y) > \varepsilon$. We denote the maximal cardinality of an $(n,\varepsilon,A)$-separated set in $X$ by $s_n^A(f_{1,\infty},\varepsilon)$. By a {\sl topological sequence entropy of $f_{1,\infty}$ with respect to the sequence $A$} we mean $$h_A(f_{1,\infty}) := \lim_{\varepsilon\rightarrow 0}\limsup_{n\rightarrow\infty} \frac{1}{a_n}\cdot\log s_n^A(f_{1,\infty},\varepsilon)$$
\end{defini}

If we take, in the above definition, $a_i = i$, we get the notion of  {\sl topological entropy} of a NDS. Obviously, for a constant sequence of maps we get the notions of topological sequence entropy and topological entropy of an ADS.

Since for ADS on interval Theorem \ref{FSthm} holds, the following natural question arises.
\begin{conj}
\label{FConj}
Is it true that a NDS $(I,f_{1,\infty})$ is Li-Yorke chaotic if and only if there exists a sequence $S$ such that the topological sequence entropy $h_S(f_{1,\infty}) > 0$?
\end{conj}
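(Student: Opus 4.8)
The plan is to answer Problem~\ref{FConj} in the negative, by producing for each of its two implications a uniformly convergent NDS on $I$ with surjective maps that violates it. A counterexample to one implication is already available in \cite{Sot}, obtained by a minor modification of the example from the proof of Theorem~1 in \cite{Dvor}: it is a uniformly convergent NDS that is Li-Yorke chaotic while $h_S(f_{1,\infty})=0$ for every sequence $S$. So what remains -- and this is the paper's main result -- is the complementary counterexample: a uniformly convergent NDS $(I,f_{1,\infty})$, say $f_{1,\infty}\to f$, such that $h_S(f_{1,\infty})>0$ for a suitable increasing sequence $S$ but $(I,f_{1,\infty})$ is not Li-Yorke chaotic.

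It is worth noting first what the standing hypotheses already dictate. By \v{S}tef\'{a}nkov\'{a}'s theorem \cite{Stef} a uniformly convergent NDS inherits Li-Yorke chaos from its limit, so the target limit $f$ must be non-LYC; applying Theorem~\ref{FSthm} to the autonomous system $(I,f)$ then gives $h_S(f)=0$ for every sequence $S$, in particular $h(f)=0$. Hence the positive sequence entropy of $f_{1,\infty}$ cannot be inherited from $f$: it must be manufactured by the non-autonomy, from the fact that the turning-point structures of the maps $f_i$, being only approximately those of $f$, fail to line up under composition. In particular the good sequence $S$ has to be genuinely sparse, the topological entropy $h(f_{1,\infty})$ will still be $0$, and the extra complexity will be visible only when the time axis is resampled along $S$.

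The construction I would attempt proceeds inductively in consecutive finite blocks, whose end-times form the sparse sampling sequence $S=\{s_j\}_{j\ge1}$. As the limit I would take a transparent zero-entropy, non-LYC surjection $f$ which is locally expanding near a repelling fixed point and sends every other orbit, in finitely many steps, to an attracting fixed point (e.g. a map behaving like $x\mapsto 2x$ near $0$ and like the constant $1$ on $[\tfrac12,1]$). During the $j$-th block one installs, as perturbations of $f$ of size $o(1)$ located near the repelling point, one more tiny ``fold''; the local expansion of $f$ magnifies the fold so that, by the end-time $s_j$, the composition $f_1^{s_j}$ has at least $2^{j}$ laps, each of which $f_1^{s_j}$ maps onto an interval of length at least a fixed $\varepsilon_0>0$ (the images overlapping freely in $I$), with these laps refining those produced at time $s_{j-1}$. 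Picking one point per lap then yields, for each $n$, an $(n,\varepsilon_0,S)$-separated subset of $I$ of cardinality about $2^{n}$, so $h_S(f_{1,\infty})\ge\log 2>0$. The block lengths, fold locations and fold depths are chosen so that $f_{1,\infty}\to f$ uniformly, and the graphs are reconnected outside the perturbation windows so that every $f_i$ remains surjective. Crucially, the perturbations are kept small enough that every orbit is still eventually expelled from the active region and then, up to a vanishing error, governed by $f$, ending near one of its (finitely many) attracting fixed points; consequently, for any two distinct points $x,y$ the orbits $f_1^n(x),f_1^n(y)$ either converge to the same fixed point, giving $\limsup_n\varrho(f_1^n(x),f_1^n(y))=0$, or to different ones, giving $\liminf_n\varrho(f_1^n(x),f_1^n(y))>0$. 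In neither case is $\{x,y\}$ a Li-Yorke pair, so $(I,f_{1,\infty})$ has no Li-Yorke pair at all and is in particular not LYC.

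The main obstacle is to secure positive sequence entropy \emph{at a fixed scale}. Because $f_i\to f$ uniformly, the perturbations carrying the extra complexity are of size $o(1)$, so a careless construction yields only laps of width tending to $0$, which contribute nothing to $h_S$ once $\varepsilon<\varepsilon_0$; obtaining genuine laps of width $\varepsilon_0$ forces one to exploit the local expansion of $f$ to inflate each minuscule installed fold to macroscopic size, which in turn requires a delicate synchronisation of the block lengths, the positions and depths of the folds, and the sparsity of $S$, all while keeping the perturbation sizes summable enough that $f_{1,\infty}\to f$ uniformly and each $f_i$ stays surjective. The failure of Li-Yorke chaos, by contrast, is comparatively soft: since every orbit is eventually absorbed by the tame limit dynamics, the exponentially many laps that $f_1^{s_j}$ produces at the sampling times are purely transient -- each fold is magnified and then discharged into the attracting set rather than sustained by a recurrent horseshoe -- and this transience is exactly the mechanism by which the non-autonomy decouples positive sequence entropy from Li-Yorke chaos, in contrast with the autonomous interval situation of Theorem~\ref{FSthm}. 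Carrying out this synchronisation and verifying all the requirements at once is the heart of the argument; together with the counterexample of \cite{Sot} for the converse implication, it shows that the answer to Problem~\ref{FConj} is negative.
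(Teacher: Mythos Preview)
Your overall plan and your diagnosis of the constraints on the limit $f$ are correct, and the high-level mechanism you describe --- manufacture transient complexity in blocks, then collapse it so no Li--Yorke pair survives --- is indeed what the paper does. The gap is in your choice of limit map. With an $f$ whose only recurrence consists of a repelling fixed point at $0$ and an attractor at $1$, a perturbation of size $o(1)$ can install a fold (or even a full horseshoe) only on an interval of length $\delta_j\to 0$; the local expansion then carries that structure \emph{once} toward the attractor and destroys it. There is no return mechanism, so within a single block you obtain at most a bounded number of laps at the fixed scale $\varepsilon_0$, not exponentially many. Your alternative picture, in which the laps at time $s_j$ refine those at $s_{j-1}$, is incompatible with your own device for excluding Li--Yorke pairs: refinement would require the image of the level-$j$ laps to meet the next fold, but that fold must sit in an interval of size $\delta_{j+1}\to 0$, whereas the images you need are at scale $\varepsilon_0$; conversely, if every orbit is eventually absorbed by the attractor, the lap structure cannot persist across blocks to be subdivided. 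Either way $\tfrac{1}{n}\log s_n^S(f_{1,\infty},\varepsilon_0)\to 0$, and you acknowledge this obstacle without actually overcoming it.

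The paper resolves exactly this difficulty by taking a very different limit $f$: a zero-entropy, non-LYC surjection obtained by \emph{blowing up a full orbit of the dyadic adding machine} (Theorem~\ref{ThmLim}), so that $f$ permutes a bi-infinite family of nondegenerate intervals $\{G_j\}_{j\in\mathbb Z}$ with $|G_0|>3\varepsilon_0$. A perturbation supported in a shrinking $f$-periodic interval $J_i\supset G_{p_i}$ (hence uniformly small) makes the orbit of $G_0$ genuinely periodic of period $2^{k_i}$; along this cycle the orbit returns to the \emph{large} interval $G_0$ once per period, and a $3$-lap fold inserted in $G_{p_i}$ each period produces, after $a_n$ periods, $3^{a_n}$ branches that are $\varepsilon_0$-separated at the $a_n$ return times to $G_0$. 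A final contraction $\psi$ then sends the whole active subinterval to a single point, so every orbit in $\bigcup_j G_j$ eventually coincides with the orbit of a centre point, while outside $\bigcup_j G_j$ the dynamics is conjugate to the adding machine and hence distal; in neither regime can a Li--Yorke pair occur. The recurrence built into $f$ --- entirely absent from your repeller/attractor model --- is precisely what permits arbitrarily many visits to a fixed-size window within each block, and hence genuinely positive $h_S$.
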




As it was stated in the Introduction a counterexample showing that there exists a NDS $(I,f_{1,\infty})$ such that it is LYC and for any increasing sequences $S$ of integers $h_S(f_{1,\infty}) = 0$, can be found in \cite{Sot}.

The main result of the paper shows that even the other implication does not hold:



\begin{theorem}
\label{Main1}
There exists a NDS $f_{1,\infty}$ of surjective continuous maps from $I$ to itself, such that
\begin{enumerate}
\item it converges uniformly to $f$,
\item there is a sequence $S$ of positive integers such that $h_S(f_{1,\infty}) \geq \log 3$,
\item $(I,f_{1,\infty})$ is not LYC.
\end{enumerate}
\end{theorem}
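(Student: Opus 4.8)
The plan is to build the nonautonomous system so that, along a very sparse sequence of times $S$, the maps $f_i$ still act like a full $3$-shift factor (producing sequence entropy at least $\log 3$), while at all other times the maps contract so aggressively that no two distinct points can keep returning close to each other infinitely often in the $\liminf$ sense without actually converging — thereby killing every Li-Yorke pair. Concretely, I would fix an increasing sequence $S = \{s_n\}$ growing fast (say $s_n$ super-exponentially) and, on each block of indices between $s_n$ and $s_{n+1}$, let $f_i$ be a tiny perturbation of a piecewise linear map with a single ``expanding step'' of slope $3$ supported on a subinterval of length shrinking to $0$, followed (for the remaining indices of the block) by strongly contracting surjections (e.g. maps whose image-to-domain ratio on all relevant subintervals is at most some $\lambda_n \to 0$). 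Surjectivity is preserved by using piecewise linear maps that are onto $I$; uniform convergence to the constant-type (or identity-type) limit map $f$ follows since the ``active'' part of $f_i$ is confined to intervals of vanishing length, so $\sup_x |f_i(x) - f(x)| \to 0$.

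For part (2), I would exhibit, for suitable small $\varepsilon$, a Cantor-like set of $3^n$ points that are pairwise $(n,\varepsilon,S)$-separated: choosing at stage $n$ one of $3$ subintervals determined by the expanding step active around time $s_n$, and arranging (by the contraction between blocks) that a choice made at stage $n$ is ``remembered'' — i.e. still resolved at scale $\varepsilon$ — at time $a_n = s_n$. Then $s_n^S(f_{1,\infty},\varepsilon) \geq 3^n$, and since $a_n$ can be taken comparable to $n$ after reindexing the coding times (or more carefully, one arranges $\frac{1}{a_n}\log 3^n \to \log 3$ by letting the coding happen at a subsequence where $a_n / n \to 1$; alternatively one simply takes $S$ itself to be the sequence of coding times so $a_n = $ the $n$-th coding time and verifies $\limsup \frac1{a_n}\log s_n^S \geq \log 3$ by making the active times a positive-density subsequence of $S$). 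This is a routine but delicate bookkeeping step; the key point is that separation achieved at stage $n$ survives because nothing expands it back together.

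For part (3), suppose $x \neq y$ form a Li-Yorke pair, so $\varrho(f_1^n(x), f_1^n(y))$ has positive $\limsup$ but zero $\liminf$. The idea is that between consecutive active times $s_n$ and $s_{n+1}$ the orbits are squeezed together by a factor $\leq \lambda_n \to 0$, so once they are close at some late time they stay close forever: $\liminf = 0$ would then force $\limsup = 0$, a contradiction. More precisely, I would show the ``diameter'' of the $\omega$-limit set structure collapses: if $\varrho(f_1^m(x),f_1^m(y)) < \delta$ for some $m$ lying past all but finitely many expanding steps relative to $\delta$, then for all $k \geq m$ one has $\varrho(f_1^k(x),f_1^k(y)) \leq C\delta$ for a uniform constant, because each expanding step can enlarge a distance by at most $3$ times the (vanishing) length of its support, and is immediately followed by enough contraction to undo it. Hence $\liminf = 0 \Rightarrow \limsup = 0$, no scrambled pair exists, and $(I,f_{1,\infty})$ is not LYC.

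The main obstacle is making parts (2) and (3) coexist: the same expanding steps that create the $3^n$ separated configurations needed for positive sequence entropy must be weak enough (supported on intervals of length $o(1)$, and ``sandwiched'' by contractions) that they cannot sustain a Li-Yorke pair. The reconciliation is that sequence entropy only needs separation to be visible at the single snapshot times $a_n$, whereas a Li-Yorke pair needs persistent recurrence of separation at *all* large times in the $\limsup$ while also returning arbitrarily close; by routing all the ``action'' through sparse, non-accumulating expansions of shrinking support and demanding heavy contraction in between, one gets the first for free and forbids the second. I expect the careful estimate bounding the cumulative distortion over a full inter-coding block — showing one expansion of slope $3$ on a short interval plus a long run of $\lambda_n$-contractions is a net contraction once $n$ is large — to be the technical heart of the argument.
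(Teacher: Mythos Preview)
Your outline has a genuine gap, and it is precisely the point you flag as ``the main obstacle'' but do not actually resolve: the mechanism you propose for killing Li--Yorke pairs also kills the sequence entropy.

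You want the expanding steps to live on intervals of length $\ell_n \to 0$ (this is forced by uniform convergence of $f_i$ to the limit map), and you want the $3^n$ separated configurations to be built by coding through these steps. But then the points you produce are separated only at scale $O(\ell_n)$ at time $s_n$: two points that chose different branches at stage $n$ differ by at most the diameter of the image of the $n$-th active interval. Since $\ell_n \to 0$, for any fixed $\varepsilon>0$ only finitely many coding stages give $\varepsilon$-separation, so $s_n^S(f_{1,\infty},\varepsilon)$ stays bounded and $h_S(f_{1,\infty})=0$. Your later remark that ``separation achieved at stage $n$ survives because nothing expands it back together'' gets the issue backwards: the problem is not that separation is destroyed, it is that it was never at a fixed positive scale to begin with. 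Relatedly, your handling of the normalization in the entropy definition is confused; with the paper's convention the denominator is the actual time $a_n$, and your attempts to make $a_n/n\to 1$ by ``reindexing'' or by taking ``positive-density'' active times do not cohere with also wanting $S$ sparse.

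The paper's construction addresses exactly this tension, and the device is not a contraction--expansion balance but a \emph{structural} one. The limit map $f$ is a zero-entropy interval map whose unique infinite $\omega$-limit set is an adding-machine factor with a blown-up orbit $\{G_j\}_{j\in\mathbb Z}$ of nondegenerate intervals, and $f(G_j)=G_{j+1}$. The $3$-horseshoes $\varphi_{i,n}$ are planted inside a tiny periodic subinterval $K^n_{p_i}\subset G_{p_i}$ (so the perturbation of $f$ is small), but the underlying $f$-dynamics carries that interval around the cycle and, at the sampling times in $S$, lands it in the \emph{fixed-size} interval $G_0$. Thus the $3^m$ branches are separated at the fixed scale $\varepsilon_0=|G_0|/3$ regardless of how small the perturbation support is --- this is what your scheme lacks. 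On the Li--Yorke side, the paper does not argue ``once close, always close'' (which, as you half-notice, conflicts with surjectivity); instead each block ends with a collapse map $\psi_{i,n}$ that is equal to $f$ outside a small interval and sends $K^n_{p_i}$ to a single point, so every trajectory in $\bigcup_j G_j$ is eventually constant, while outside $\bigcup_j G_j$ the system is conjugate to the distal adding machine. Both halves rely on the adding-machine / blown-up-orbit scaffolding; a purely ``identity-plus-small-bumps-plus-contractions'' picture cannot deliver fixed-scale separation and surjectivity simultaneously.
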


Let us continue with some notions we will use in the proof. Let $f \in C(I)$ and let $h(f) = 0$. Then for every infinite $\omega$-limit set $\tilde{\omega}$ there exists an {\sl associated system} $\mathcal{I}(\tilde{\omega}) := \{J(k,n)| 0 \leq k < 2^n, n \geq 0\}$ of minimal $f$-periodic compact intervals of period $2^n$ and that $J(n+1,0) \subset J(n,0)$ and
\begin{equation}
\label{AssocSet}
f(M(\tilde{\omega})) = M(\tilde{\omega}) := \bigcap_{n\geq 0} \bigcup_{0\leq k < 2^n} J(n,k) \supseteq \tilde{\omega}.
\end{equation}
The set $M(\tilde{\omega})$ is called {\sl the simple set of the associated system}.

We will also use some notions from symbolic dynamics. By $\Sigma$ we will denote the {\sl shift space} which is a set of all infinite sequences of two symbols, say 0 and 1. This space is equipped with the lexicographical order and the order topology. Let $n_k$ denote a finite sequence of length $k$, $n_k\in\{0,1\}^k$ or a {\sl $k$-block}. Then we denote by $\Sigma_{n_k}$ an {\sl $n_k$-cylinder}, i.e., a subset of $\Sigma$ consisting of those sequences starting with the block $n_k$. The block $n_k$ is in this context called a {\sl code} of the cylinder $\Sigma_{n_k}$.

Let $\underline{n}\in\Sigma$ and $k\in\mathbb{N}$, then by $\underline{n}|k$ we mean a $k$-block consisting of $k$ first symbols of $\underline{n}$. We also use $\underline{0}$ and $\underline{1}$ as a shorthand for constant sequences consisting of 0's or 1's, respectivelly. And by $n_k\underline{0}$ we mean a concatenation of the $k$-block and the infinite sequence \underline{0}.

An important role in our proof plays the adding machine $\alpha\!\!: \Sigma\rightarrow\Sigma$. This mapping adds a sequence $1\underline{0}$ to a mapped sequence modulo 2. E.g. a (piece of) the full trajectory of $\underline{0}$ looks like this:
\begin{equation}
\label{AddMachEx}
\ldots \mapsto 00\underline{1} \mapsto 10\underline{1} \mapsto 0\underline{1} \mapsto \underline{1} \mapsto \underline{0} \mapsto 1\underline{0} \mapsto 01\underline{0} \mapsto 11\underline{0} \mapsto \ldots
\end{equation}
A mapping $\varphi\!:\Sigma \rightarrow \Sigma$ is called a {\sl simple map} if for any $\underline{n} \in \Sigma$ and any $n\in\mathbb{N}$ the cylinder $\Sigma_{\underline{n}|k}$ is $2^k$ periodic. It was proved in \cite{BS} that a map is simple if and only if it is conjugate to the adding machine.

Finally, notice that we will also use the word {\sl block} for a finite sequence of maps. We believe it will always be obvious from the context whether we mean a block of 0's and 1's or of mappings.

\section{Main result}

Before getting to the proof of the main theorem itself, let us first construct an easy example of a NDS which does not converge uniformly, it is made of surjective interval maps, it has positive topological entropy and it has no Li-Yorke pairs. This example is given here for a methodical reason -- we use the ideas of this construction in the proof of the main theorem.

\begin{lemma}
\label{NonUnif}
There exists a NDS $(I,f_{1,\infty})$ where all the maps in $f_{1,\infty}$ are surjective (the sequence does not converge uniformly), it has positive topological entropy and it is not LYC.
\end{lemma}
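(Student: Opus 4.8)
The plan is to assemble $f_{1,\infty}$ from consecutive \emph{blocks} of maps organised around an adding‑machine skeleton. First I would fix once and for all a surjective piecewise‑linear model $\tau\in C(I)$ with $\tau(0)=0$, $\tau(1)=1$ and slope of modulus $3$ (for instance the full $3$‑lap zig‑zag), so that $h(\tau)=\log 3$. On block $k$ each map acts, on a finite family of pairwise disjoint subintervals taken from an associated system $\{J(n,k)\}$ as in (\ref{AssocSet}), by cyclically permuting those subintervals according to one step of the $2$‑adic adding machine $\alpha$ while folding each subinterval onto the next one by a rescaled copy of $\tau$, and homeomorphically on the complementary gaps; the families refine from block to block along the tree of the simple set $M$. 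Thus the whole system carries the invariant set $M$, on which the dynamics is conjugate to $\alpha$ (a simple map, by the criterion quoted above), while block $k$, of some length $\ell_k$, multiplies by roughly $3^{\ell_k}$ the ``folding complexity'' carried on each current subinterval. Surjectivity of every map is automatic because $\tau$ is surjective and fixes $0$ and $1$, and the sequence fails to converge uniformly because consecutive blocks keep re‑folding.

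The absence of Li‑Yorke pairs I would deduce from the fact that the skeleton is an adding machine, which is equicontinuous, hence distal. The bookkeeping is to be arranged so that the forward orbit of every point eventually enters and remains inside one of the period‑$2^n$ intervals $J(n,\cdot)$, whose diameters tend to $0$, and so that on such an interval the dynamics differs from the isometric adding machine only through the folding. Then for any two distinct points $x,y$ one of two things occurs: either from some level $n_0$ on they lie in \emph{different} intervals of the associated system, so $\varrho(f_1^n x,f_1^n y)$ stays bounded below by a positive constant for all large $n$ and $\liminf_n\varrho(f_1^n x,f_1^n y)>0$; or at every level they lie in the \emph{same} interval, so $\varrho(f_1^n x,f_1^n y)\to 0$ and $\limsup_n\varrho(f_1^n x,f_1^n y)=0$. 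In either case $\liminf_n=\limsup_n$, so $(x,y)$ is not a Li‑Yorke pair; hence $(I,f_{1,\infty})$ has no Li‑Yorke pair at all, and in particular is not LYC.

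The delicate point — the part I expect to be the real work — is the positivity of topological entropy and the way it must be reconciled with the previous paragraph. Because the block maps are piecewise linear with slope of modulus $3$ \emph{on the subintervals} (and only there), the transition map $f_1^n$ is $3$‑expanding along every orbit that stays in the associated‑system tree, which makes the corresponding $\varrho_n$‑balls exponentially small and would give an exponential count $s_n(f_{1,\infty},\varepsilon)\gtrsim 3^{\,n}$ — but only at a scale comparable with $\operatorname{diam}J(n,\cdot)$, which shrinks to $0$, so for a fixed $\varepsilon$ this contributes only finitely much and, taken naively, one gets $h(f_{1,\infty})=0$; a system all of whose orbits are trapped into intervals of diameter $\to 0$ is, morally, of zero entropy. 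Squaring this circle is exactly the heart of the construction: the subintervals, the lengths $\ell_k$, the placement of the folds (and, in the stronger statement, the increasing sequence $S$ along which the complexity built up at the current scale actually becomes visible) must be chosen so that a fixed‑scale exponential count genuinely survives the dilution while every orbit is still shepherded into the simple set and the map sequence still fails to converge uniformly. Carrying out this bookkeeping — so that one truly obtains $h(f_{1,\infty})>0$ (indeed $\geq\log 3$ along a suitable $S$) together with the two structural properties above — is where I expect the main obstacle to lie.
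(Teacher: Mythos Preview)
Your proposal does not prove the lemma: you correctly identify the central tension --- folding at shrinking scales yields no fixed-scale separated sets, while folding at a fixed scale would destroy the distality underlying your no-LY-pair dichotomy --- and then explicitly leave its resolution open (``where I expect the main obstacle to lie''). That resolution \emph{is} the lemma. Moreover, the dichotomy itself is not established: a $3$-lap fold from $J(n,k)$ onto $J(n,k+1)$ is $3$-to-$1$ and does not respect the binary refinement into level-$(n+1)$ subintervals, so two points starting in the same level-$n$ interval need not land in the same level-$(n+1)$ interval after the fold, and the implication ``same interval at every level $\Rightarrow$ distance $\to 0$'' is unjustified. Folding is precisely the mechanism that manufactures scrambled pairs, so grafting it onto an adding-machine skeleton and then invoking distality is circular.

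The paper avoids both difficulties by a far more elementary device that you are missing: it does not try to make positive entropy and non-chaoticity coexist structurally, it alternates them in time. Each block $B_k$ consists of $i_k$ iterates of a single $3$-horseshoe map $\varphi_k$ supported on an interval $K_k$ (producing an $(m_k,\varepsilon)$-separated set of size $\ge 3^{i_k}$ at a \emph{fixed} scale $\varepsilon<1/9$), followed by one collapsing map $\psi_k$ that sends all of $K_k$ to the single point $1/2$. Since the $K_k$ exhaust $I$, every trajectory is eventually constant equal to $1/2$, so there are no LY-pairs whatsoever; choosing $i_k$ to grow rapidly gives $h(f_{1,\infty})\ge\log 3$. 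Your adding-machine skeleton is closer in spirit to the paper's proof of the main theorem (where uniform convergence must be arranged), but even there the LY-pairs are killed by analogous collapse maps $\psi_{i,n}$, not by distality alone.
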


\begin{proof}
Let $A := \{a_n\}_{n=1}^\infty$ with the first member $a_1 = 1/3$, be a strictly decreasing sequence of reals and such that $\lim_{n \rightarrow \infty} a_n = 0$. Let $B = \{b_n\}_{n=1}^\infty$ with $b_n:= 1-a_n$ and let $K_n := [a_n, b_n]$. This sequence of intervals is strictly increasing in the inclusion-wise sense and its limit is the unit interval $I$. Next, denote the intervals $U_n := [a_{n+1}, a_n]$ and $V_n := [b_n, b_{n+1}]$.

Now, we will use these intervals to construct two sequences of surjective mappings from $C(I)$, out of which we will construct the desired NDS. The first of them is the sequence $\varphi_i$ where
$$\varphi_1(x) = \left\{\begin{array}{rl} x, & x\in[0,\frac13)\cup[\frac23,1]\\ 3x-\frac23, & x\in[\frac13,\frac49]\\ -3x+2, & x\in [\frac49,\frac59)\\ 3x-\frac53, & x\in[\frac59,\frac23) \end{array} \right.$$
and the $\varphi_n$ for $n > 1$ is then defined as identity outside of $K_n$ and a three lap piecewise linear mapping inside it such that $\varphi|_{U_{n-1}}$ as well as $\varphi|_{V_{n-1}}$ are increasing and onto $K_n$ and $\varphi|_{K_{n-1}}$ is decreasing and onto $K_n$ (see Figure \ref{phin}).

\begin{figure}
\begin{subfigure}{.495\textwidth}
\centering
\includegraphics[scale=0.35]{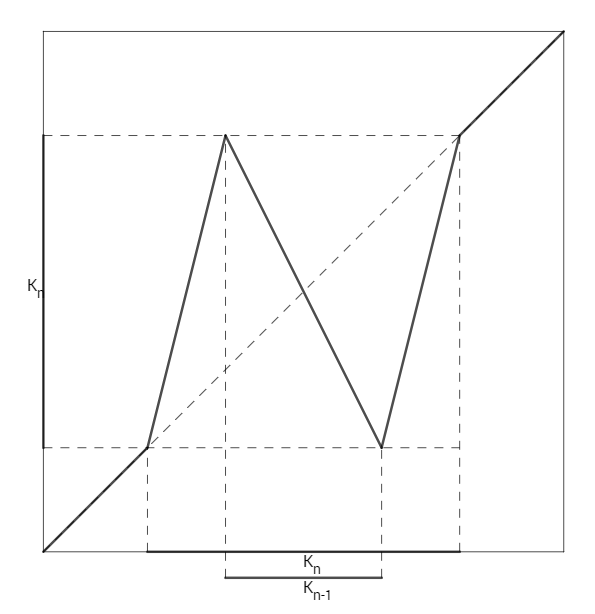}
\caption{\sl The map $\varphi_n$ for $n>1$}
\label{phin}
\end{subfigure}
\begin{subfigure}{.495\textwidth}
\includegraphics[scale=0.35]{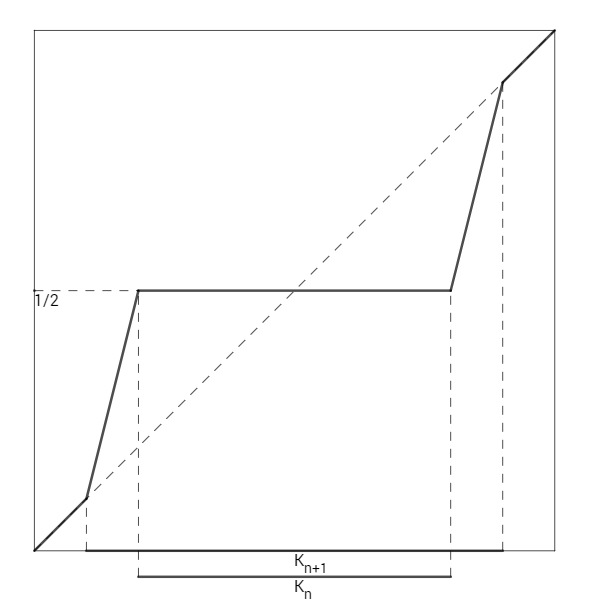}
\caption{\sl The map $\psi_n$}
\label{psin}
\end{subfigure}
\caption{\sl Sketches of maps from proof of Lemma \ref{NonUnif}}
\end{figure}

Let us proceed to the second sequence, $\psi_n$, it is again identity but in this case outside of $K_{n+1}$, constant and equal to 1/2 on $K_n$ and linearly continuously extended in between (see Figure \ref{psin}).

The desired NDS is constructed by blocks of $i_k$ repetitions of $\varphi_k$ followed by a single map $\psi_k$. Particularly $$f_{1,\infty} := B_1, B_2, \ldots$$ where for $k\in\mathbb{N}$ $$ B_k := \underbrace{\varphi_k, \ldots, \varphi_k}_{i_k - \mbox{\scriptsize times}}, \psi_k.$$
If we consider the finite sequence (or block) $B_k$ as a composition of its members, we get $$B_k(x) := \psi_k \circ \varphi_k \circ \ldots \circ \varphi_k (x) = \psi_k(x)$$
and hence $B_k \circ B_{k-1} \circ \ldots \circ B_1 (x) = \psi_k(x)$. So, the trajectory of every point is eventually constant (with the value 1/2). That excludes any possibility for having a LY-pair, and hence such NDS is not LYC.

Finally, let us prove that the topological entropy is positive. Before collapsing by the last map $\psi_k$ every block creates a 3-horseshoe (i.e., there are nonempty intervals, pairwise intersecting at most at one point, $U$, $V$ and $W$ such that $i_k$-th iterate under $\varphi_k$ of any of them covers the union $U\cup V \cup W$) and hence there is an $(i_k,\varepsilon)$-separated set of cardinality $s_k \geq 3^{i_k}$ in the interval $K_k$ for an $\varepsilon < 1/9$. And consecutively, $f_{1,\infty}$ has an $(m_k,\varepsilon)$-separated set of cardinality $s_k \geq 3^{i_k}$ where $m_k := i_1 + i_2 + \ldots + i_k + k$. Then, thanks to the surjectivity of $\varphi$'s, if the sequence $i_k$ increases rapidly enough we get $$ h(f_{1,\infty}) \geq \limsup_{k \rightarrow \infty} \frac{\log s_k}{m_k} = \limsup_{k \rightarrow \infty} \frac{\log s_k}{i_k} = \log 3 > 0$$
\end{proof}

The counterexample from the Theorem \ref{Main1} will be constructed using the blowing-up orbit technique, it is based on result by Bruckner and Sm\'{i}tal \cite{BS}. In the following theorem we construct the uniform limit of our wanted NDS using the blowing-up orbits technique and symbolic dynamics and later on we construct the counterexample from the Theorem \ref{Main1} itself.

\begin{theorem}
\label{ThmLim}
There is a surjective map $f$ from $C(I)$ such that:
\begin{enumerate}
\item $f$ has exactly one infinite $\omega$-limit set $\tilde{\omega}$ and it has zero topological entropy;
\item $f$ is not LYC;
\item the simple set $M(\tilde{\omega})$ generated by the associate system $\mathcal{J}(\tilde{\omega})$ has non-empty interior;
\item for the system $\{G_n\}_{n\in\mathbb{Z}}$ of non-degenerate components of $M(\tilde{\omega})$ it holds: $\forall n\in\mathbb{Z}\ f(G_n) = G_{n+1}$.
\end{enumerate}
\end{theorem}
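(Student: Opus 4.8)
The plan is to realise $f$ as a blow-up of a single orbit of the adding machine placed inside a ``simple'' interval map. I would first fix, using the Bruckner--Sm\'{\i}tal description of zero-entropy interval maps \cite{BS}, a surjective auxiliary map $f_0\in C(I)$ with the following properties: $f_0$ has zero topological entropy, it is not LYC, its only infinite $\omega$-limit set is a Cantor set $Q$ on which $f_0$ restricts to a homeomorphism conjugate to the adding machine $\alpha$, its associated system $\{J_0(n,k)\}$ of $2^n$-periodic intervals satisfies $\bigcap_n\bigcup_k J_0(n,k)=Q$ (so that all components of its simple set are degenerate), and --- this is the decisive technical demand --- $f_0$ is piecewise monotone with all of its turning points lying outside $Q$, so that $f_0$ is locally monotone at every point of $Q$. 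Symbolic dynamics enters here, through the identification of $(Q,f_0|_Q)$ with $(\Sigma,\alpha)$ and of the addresses of the points of $Q$ with sequences from $\Sigma$.

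Next I would choose a point $q_0\in Q$ whose full orbit $\{q_n:=f_0^{n}(q_0)\}_{n\in\mathbb{Z}}$ --- bi-infinite and dense in $Q$, since $\alpha$ is a minimal homeomorphism --- avoids the turning points of $f_0$ and the countably many gap-endpoints of $Q$; such $q_0$ exists because $Q$ is uncountable. I would then blow up this orbit: fix positive reals $(\ell_n)_{n\in\mathbb{Z}}$ with $\sum_n\ell_n<1$ and construct a non-decreasing continuous surjection $\pi\colon I\to I$ whose only non-degenerate fibres are closed intervals $G_n:=\pi^{-1}(q_n)$ of length $\ell_n$ (a routine cut-and-paste; since the $q_n$ are not gap-endpoints, the $G_n$ are pairwise disjoint and pairwise non-adjacent). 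Finally I would define $f\colon I\to I$ by letting $f$ map each $G_n$ affinely onto $G_{n+1}$ --- increasingly or decreasingly according to the local monotonicity type of $f_0$ at $q_n$ --- and by setting $f:=\pi^{-1}\circ f_0\circ\pi$ on $I\setminus\bigcup_n G_n$, which is legitimate there because $\pi$ has singleton fibres and $f_0$ carries $Q\setminus\{q_n\}$ into itself. By construction $\pi\circ f=f_0\circ\pi$, $f$ is surjective, and $f(G_n)=G_{n+1}$ for all $n\in\mathbb{Z}$, which is conclusion (4).

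It remains to verify the other three conclusions. For (3): the components of $M(\tilde\omega):=\pi^{-1}(Q)$ are exactly the $G_n$ together with single points, so $M(\tilde\omega)$ has non-empty interior, with associated system $\mathcal{J}(\tilde\omega)=\{\pi^{-1}(J_0(n,k))\}$. For (1): every periodic point of $f$ lies off $\bigcup_n G_n$ (the $G_n$ are disjoint and $n\mapsto n+1$ is aperiodic) and there projects by $\pi$ to a periodic point of $f_0$ of the same period, since $\pi$ has singleton fibres off $\bigcup_n G_n$; hence the set of periods of $f$ equals that of $f_0$, a subset of $\{2^k:k\ge 0\}$, and therefore $h(f)=0$. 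Using $\pi(\omega_f(x))=\omega_{f_0}(\pi x)$, minimality of $\alpha$ on $Q$, and $\ell_n\to 0$, one checks that $f$ has exactly one infinite $\omega$-limit set, and that it is contained in $M(\tilde\omega)$. For (2): $f_0$ is not LYC and the blow-up creates no Li-Yorke pair --- for $x,y$ in one $G_m$ we have $\varrho(f^{n}x,f^{n}y)\le\ell_{m+n}\to 0$, so the $\limsup$ condition fails; if $x,y$ meet $\bigcup_n G_n$ but do not share a common $G_m$, their images run along the odometer with a fixed non-zero $2$-adic difference and hence stay uniformly apart, so the $\liminf$ is positive; and the case $x,y\notin\bigcup_n G_n$ reduces to $f_0$. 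Thus $f$ is not LYC.

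The step I expect to be the main obstacle is ensuring that the blow-up is legitimate: that $f$ is continuous, and that the construction does not secretly enlarge either the family of non-degenerate components or the set of periods. Continuity of $f$ at the endpoints of the intervals $G_n$ is precisely what forces the constraints made above --- $f_0$ piecewise monotone with all turning points off $Q$ (hence locally monotone near $Q$), the orbit $q_0$ chosen to avoid the turning points and the gap-endpoints of $Q$, and the orientation of each affine piece matched to that of $f_0$ at the corresponding point; granting these, continuity of $f$ at every point of $I$ follows from compactness of $I$ and continuity of $\pi$, $f_0$ and $\alpha$. The second genuinely technical point is the $\omega$-limit-set bookkeeping needed for conclusion (1), where one must rule out any extra infinite $\omega$-limit set arising from the blown-up intervals.
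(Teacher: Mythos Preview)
Your construction is the same blow-up-of-an-adding-machine-orbit idea as the paper's, and is correct in spirit. The paper's execution is more direct: instead of first building an auxiliary interval map $f_0$ and then blowing up a generic orbit inside it, the paper starts with the adding machine $\bar\alpha=\theta\circ\alpha\circ\theta^{-1}$ on the Cantor middle-third set $Q\subset I$, blows up the full $\bar\alpha$-orbit $\bar B$ of $\theta(\underline 0)$ --- which is exactly the set of gap-endpoints of $Q$ --- to intervals $G_j$ via an increasing set-valued map $h$, defines $f'$ on the resulting set $R=\bigcup h(Q)$ by semiconjugacy, and only then extends linearly across the complementary gaps to a map on all of $I$. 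This bypasses the two side constraints you impose (piecewise monotonicity of $f_0$ with turning points off $Q$; the blown-up orbit avoiding gap-endpoints) and also a small gap in your definition of $f$: the formula $f=\pi^{-1}\circ f_0\circ\pi$ on $I\setminus\bigcup_n G_n$ is ill-defined at any $x$ with $\pi(x)\notin Q$ but $f_0(\pi(x))=q_m$ for some $m$. Your justification (``$f_0$ carries $Q\setminus\{q_n\}$ into itself'') covers only the case $\pi(x)\in Q$, and such extra preimages of the $q_m$ from outside $Q$ cannot in general be eliminated by the choice of $q_0$, since for a piecewise monotone $f_0$ the set $\{y:\#f_0^{-1}(y)\ge 2\}$ may meet $Q$ in an uncountable set. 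This is repairable --- send such $x$ to the appropriate endpoint of $G_m$ and check continuity --- but it needs to be said.

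One further remark: the paper's specific decision to blow up the orbit coded by $B_0\cup B_1$, with $G_0$ carrying the code $\underline 0$, is not incidental. It is used essentially in the subsequent proof of Theorem~\ref{Main2}, where the perturbations $\lambda_{n_i}$ and $\eta_i$ are computed via the symbolic map $\tau_{n_k}$ of Lemma~\ref{lm}. If you carry your variant (a generic orbit avoiding gap-endpoints) forward to that proof, you will have to re-index accordingly.
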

Existence of such mapping is known. A general method for such construction can be found for example in \cite{BS}. But let us delve into the proof as we will need some parts of it in later constructions.

\begin{proof}
Let us denote the Cantor middle-third set by $Q \subset [0,1]$ and let us identify it with $\Sigma$ via an increasing homeomorphism $\theta: \Sigma \rightarrow Q$. Let us denote by $B_0$ a set of those points in $\Sigma$ that end with $\underline{0}$, by $B_1$ those that end with $\underline{1}$ and by $B$ their union. Then $\bar{\alpha} := \theta \circ \alpha \circ \theta^{-1}$, where $\alpha\!\!: \Sigma \rightarrow \Sigma$ is the adding machine as defined in Section 2 -- see (\ref{AddMachEx}), is a simple map from $Q$ to itself and $\bar{B} = \theta(B)$ is its full orbit. And $\bar{B} = \{b_i\}_{i = -\infty}^\infty$, $b_0 = \theta(\underline{0})$ is also exactly the orbit which we will blow up.

Let $\{G_j\}_{j\in\mathbb{Z}}$ be a collection of disjoint compact intervals in $(0,1)$ indexed in such manner that $G_i$ is to the left of $G_j$ if and only if $b_i < b_j$. For this one-to-one correspondence between $b$'s and $G$'s we refer to $b_j$ (or rather $\theta^{-1}(b_j)$) as to a {\sl code} of $G_j$. Let $h\!: Q \rightarrow 2^{(0,1)}$ be a strictly increasing set-valued map such that $h(x)$ is a point (or a singleton) for every $x\not\in B$ and $h(b_j) = G_j$. Such map is unique since $\bar{B}$ is dense in $Q$. Let us denote $\bigcup h(Q) =: R$. We can look at the inverse $h^{-1}$ as on a non-decreasing continuous map $R \rightarrow Q$ which is constant on every $G_j$. Let $f^\prime\!: R \rightarrow R$ be a continuous map semiconjugate to the adding machine $\bar{\alpha}$, i.e., $h^{-1}\circ f^{\prime} = \bar{\alpha}\circ h^{-1}$. Let $G^\prime_j$ denote the interval $G_j$ without its right endpoint if $b_j\in B_0$ or without its left endpoint if $b_j\in B_1$, respectivelly. Then, $\tilde{\omega} := R\setminus \bigcup_j G^\prime_j$ is a Cantor set and it is minimal for the map $f^\prime$. It is quite clear that the simple set $M(\tilde{\omega}) = R$ (due to density of $B$ in $\Sigma$ and density of $G^\prime_j$ in $R$), see (\ref{AssocSet}).

Finally, we extend $f^\prime$ linearly to $f$ -- a continuous map on the whole $I$ -- so that $f$ is constant on the intervals $[0,\min R]$ and $[\max R, 1]$. The set $\tilde{\omega}$ is obviously the unique infinite $\omega$-limit set of $f$ and the topological entropy of $f$ cannot exceed the topological entropy of $f^\prime$ which is zero. So far we have proved the first two demanded properties. The other two follow from the construction.
\end{proof}

\begin{figure}
\begin{subfigure}{0.495\textwidth}
\centering
\includegraphics[scale=.35]{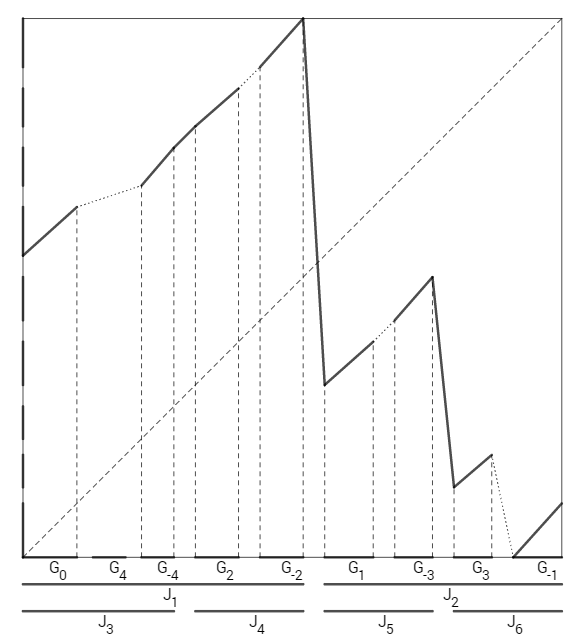}
\caption{\sl The map $f$}
\end{subfigure}
\begin{subfigure}{0.495\textwidth}
\centering
\includegraphics[scale=.35]{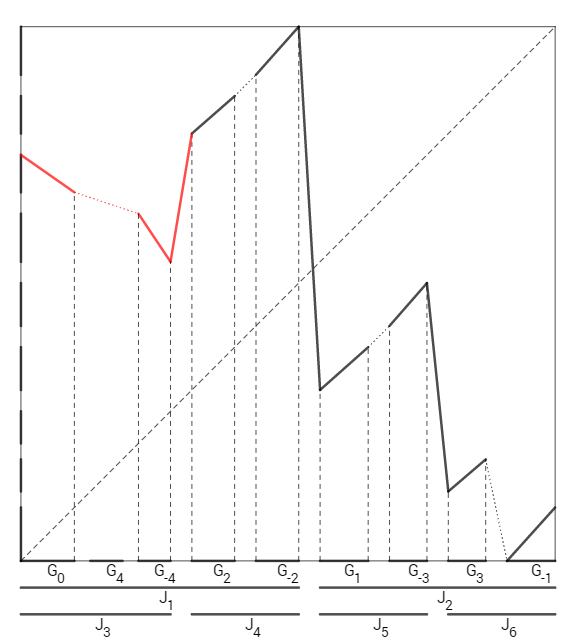}
\caption{\sl The map $\eta_{00}$}
\end{subfigure}
\caption{\sl Sketches of limit map $f$ and its perturbation, dotted lines are unspecified parts of the graph}
\end{figure}

Before we proceed we will need a lemma which uses the following map: $\tau_{n_k}\!: \Sigma \rightarrow \Sigma$ is defined for a block $n_k \in \{0,1\}^k$ to be the identity if the first $k$ symbols of a sequence $\underline{n}\in\Sigma$ are distinct from the block $n_k$ and a 0-1-after-$k$-symbols-reversing map otherwise. To make things clearer, the map $\tau_{n_k}$ does not change the first $k$ symbols of a $n_k\underline{n}$ and swaps the zeros to ones and conversely ones to zeros at the very same time in the rest of the sequence, e.g. $$\tau_{n_k}(n_k110100\ldots) = n_k001011\ldots$$ Such map is continuous on $\Sigma$ regardless of $n_k$.

\begin{lemma}\label{lm}
Let $n_k\in\{0,1\}^k$ and let $\eta\!: \Sigma \rightarrow \Sigma$ denote the map $\alpha\circ\tau_{n_k}$. Then
\begin{enumerate}
\item $\eta$ is continuous,
\item $\underline{0}$ is a $2^k$-periodic point of $\eta$ and it interesects the cylinder $\Sigma_{n_k}$ at a single point $n_k\underline{0}$,
\item every point $\underline{n} \in B$ is either a $2^k$-periodic point of $\eta$ and belongs to an orbit of $\underline{0}$, or it is an $m\cdot2^k$-periodic point of $\eta$ for some (positive) integer $m$.
\end{enumerate}
\end{lemma}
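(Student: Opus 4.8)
The plan is to route all three claims through one identity that describes a single ``full period'' of $\eta$. Part (i) comes for free: $\eta=\alpha\circ\tau_{n_k}$ is a composition of continuous self-maps of $\Sigma$ ($\tau_{n_k}$ by the remark immediately preceding the lemma, $\alpha$ being the adding machine), and since $\tau_{n_k}$ is an involution and $\alpha$ a homeomorphism, $\eta$ is itself a homeomorphism of $\Sigma$; the only extra property I shall use is that $\eta$ is \emph{injective}. For everything else I would identify each finite block, and likewise each tail of a sequence, with the $2$-adic integer it encodes when read least-significant-digit first, so that $\alpha$ becomes ``add $1$'' and $\alpha^{m}$ becomes ``add $m$''. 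Write $N\in\{0,\dots,2^{k}-1\}$ for the integer coded by $n_k$ and $\pi_k\colon\Sigma\to\mathbb{Z}/2^{k}$ for the projection onto the first $k$ coordinates. The first observation is that $\pi_k\circ\eta$ equals the successor map on $\mathbb{Z}/2^{k}$ followed by $\pi_k$, because $\tau_{n_k}$ never alters the first $k$ coordinates. Hence along any $\eta$-orbit the point lies in the cylinder $\Sigma_{n_k}$ exactly at the times of one residue class modulo $2^{k}$ — in particular every orbit meets $\Sigma_{n_k}$ — and $\eta$ agrees with $\alpha$ whenever its argument is outside $\Sigma_{n_k}$, so $\tau_{n_k}$ ``fires'' only once per $2^{k}$ steps.

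The heart of the proof is then the following computation, which I would carry out next. Starting from a point $n_k v\in\Sigma_{n_k}$ with tail $v\in\Sigma$, the first application of $\eta$ uses $\tau_{n_k}$ and produces $\alpha(n_k\bar v)$, where $\bar v$ is $v$ with all digits flipped, i.e.\ $\bar v=-1-v$ as a $2$-adic integer; the next $2^{k}-1$ applications each merely apply $\alpha$, since the orbit has left $\Sigma_{n_k}$ and does not return before step $2^{k}$. Thus $\eta^{2^{k}}(n_k v)=\alpha^{2^{k}}(n_k\bar v)$, and — writing $n_k w$ for the $2$-adic integer $N+2^{k}w$ — this is the one-line identity $N+2^{k}(-1-v)+2^{k}=N+2^{k}(-v)$, i.e.\ $\eta^{2^{k}}(n_k v)=n_k(-v)$. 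Introducing the ``$2$-adic negation'' $\sigma\colon\Sigma\to\Sigma$, $\sigma(v)=-v$, which is an involution with $\sigma(v)=v$ iff $v=\underline 0$, this says that $\eta^{2^{k}}$ maps $\Sigma_{n_k}$ into itself by $n_k v\mapsto n_k\sigma(v)$. Therefore $\eta^{2^{k+1}}$ fixes every point of $\Sigma_{n_k}$; since $\eta$ is injective and every orbit meets $\Sigma_{n_k}$, this forces $\eta^{2^{k+1}}=\mathrm{id}_{\Sigma}$, so every point of $\Sigma$ is $\eta$-periodic with period dividing $2^{k+1}$. Moreover, if $n_k v$ is the first point at which the orbit of a given $\underline n$ meets $\Sigma_{n_k}$, then $\underline n$ is $2^{k}$-periodic iff $\eta^{2^{k}}(n_k v)=n_k v$, iff $\sigma(v)=v$, iff $v=\underline 0$, iff that meeting point is exactly $n_k\underline 0$.

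From this, (ii) and (iii) follow quickly. Applied to $\underline 0$: since $\pi_k(\underline 0)=0$ and $\alpha^{N}(\underline 0)$ is the $2$-adic integer $N$, namely $n_k\underline 0$, the orbit of $\underline 0$ first meets $\Sigma_{n_k}$ at time $N$ at the point $n_k\underline 0$; by the criterion just stated $\underline 0$ is $2^{k}$-periodic, and its orbit consists of exactly $2^{k}$ points, one for each value of $\pi_k$, so it meets $\Sigma_{n_k}$ in the single point $n_k\underline 0$ — that is (ii). For (iii), let $\underline n\in B$ (the argument in fact applies to any point of $\Sigma$): $\underline n$ is $\eta$-periodic with period dividing $2^{k+1}=2\cdot2^{k}$. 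If $\underline n$ lies in the $\eta$-orbit of $\underline 0$, then by (ii) the point where its orbit meets $\Sigma_{n_k}$ is $n_k\underline 0$, so $\underline n$ is $2^{k}$-periodic; if not, that point is different from $n_k\underline 0$, so $\underline n$ is not $2^{k}$-periodic and its period, a divisor of $2^{k+1}$ but not of $2^{k}$, equals $2\cdot2^{k}$. In either case $\underline n$ is $m\cdot2^{k}$-periodic for a positive integer $m$ ($m\in\{1,2\}$), as claimed.

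The one spot where I expect genuine care to be needed is the identity $\eta^{2^{k}}(n_k v)=n_k(-v)$: one must compose the single tail-reversal $\tau_{n_k}$ with the $2^{k}$ ensuing applications of $\alpha$ and verify that the orbit re-enters $\Sigma_{n_k}$ exactly at step $2^{k}$ and not earlier (this is where $k\ge1$, i.e.\ $2^{k}\ge2$, is used). Done directly on blocks this is a clumsy carrying argument; the role of the $2$-adic picture is precisely to collapse it to the displayed one-liner, so the whole proof really hinges on fixing that dictionary cleanly at the start.
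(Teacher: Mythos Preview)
Your argument is correct, and in fact sharper than what the lemma asserts: you show that every $\eta$-period is either $2^{k}$ or $2^{k+1}$, so in (iii) one may take $m\in\{1,2\}$. The route, however, is genuinely different from the paper's. The paper proves (ii) by tracing the concrete orbit of $\underline{0}$ through the $k$-block cylinders via the evaluation map $e(n_k)$, observing that $\tau_{n_k}$ fires exactly once and sends the tail $\underline{0}$ to $\underline{1}$, whence the orbit returns to $\underline{0}$ after $2^{k}$ steps; (iii) is then dispatched in a single informal sentence (``$\eta$ is basically an adding machine that in one cylinder switches from followers of $\underline{0}$ to its predecessors''). You instead set up the $2$-adic dictionary once and derive the closed-form identity $\eta^{2^{k}}(n_k v)=n_k(-v)$ for \emph{all} $v$, from which (ii), (iii), and the sharpening $\eta^{2^{k+1}}=\mathrm{id}_{\Sigma}$ drop out uniformly. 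What your approach buys is a rigorous and complete treatment of (iii) together with the exact period structure; what the paper's approach buys is brevity for (ii) without the overhead of introducing the $2$-adic identification, at the cost of leaving (iii) essentially as a remark.
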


\begin{proof}
Continuity is obvious as $\eta$ is a composition of two continuous maps.

Even the second part of this lemma can be proven quite easily by a straightforward calculation.
\begin{equation} \label{etas} \eta^{2^k}(\underline{0}) = \eta^{2^k - e(n_k)}(n_k\underline{0}) = \eta^{2^k - e(n_k) - 1}(\alpha(n_k)\underline{1}) = \eta(\underline{1}) = \underline{0}, \end{equation}
where $e$ is so called evaluation map which assigns a sum $e(x) = \sum_{i=1}^k 2^{i-1} x_k$ to a $k$-block $x = x_1 x_2 x_3\dots x_k $.

In the first equality of (\ref{etas}) we just kept adding up $1\underline{0}$ obviously moving each time to a different cylinder (defined by a $k$-block) until we reached the $\Sigma_{n_k}$. The second equality is the only one where $\tau_{n_k}$ acutally affects things, switches the ending zeros to ones and then we again add $1\underline{0}$ to move to yet another cylinder. In the second equality we may assume that $n_k$ contains at least one zero, otherwise we could just skip right to the end. In the third equality we again just add ones from the beginning contiuing our tour through the distinct $k$-block cylinders and the fourth one is all clear.

As a by-product of the proof of the second part we can see that $n_k\underline{0}$ (and all the other points of the orbit) is $2^k$-periodic as well.

The third part of the proof follows from the facts that $\eta$ is basically adding machine that in one cylinder switches from followers of $\underline{0}$ to its predecessors and that running through all cylinders takes $2^k$ iterations.
\end{proof}

Now we are ready to prove our main result, i.e., Theorem \ref{Main1}. As we will construct it as a sequence of perturbations of a limit mapping from Theorem \ref{ThmLim}, we will restate it with some details added as a Theorem \ref{Main2}.

\begin{theorem}
\label{Main2}
Let $f$ be a surjective continuous map from $I$ to itself, satisfying conditions (i)-(iv) from the Theorem \ref{ThmLim}. Then there is a nonautonomous system $f_{1,\infty}$ of surjective continuous maps from $I$ to itself, such that
\begin{enumerate}
\item it converges uniformly to $f$,
\item there is a sequence $S$ of positive integers such that $h_S(f_{1,\infty}) \geq \log(3)$,
\item $(I,f_{1,\infty})$ is not LYC.
\end{enumerate}
\end{theorem}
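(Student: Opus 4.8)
The plan is to build $f_{1,\infty}$ as an infinite concatenation of finite \emph{blocks} of surjective maps, in the spirit of the construction of Lemma~\ref{NonUnif}, but arranged so that all perturbations are supported on regions whose diameters shrink to $0$; the resulting NDS will converge uniformly to $f$, it will carry arbitrarily large finite horseshoes of a fixed spatial scale along suitable bursts of time, and it will nevertheless contain no uncountable scrambled set.

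Concretely, first fix the blow-up data from the proof of Theorem~\ref{ThmLim}: the intervals $G_j$ ($j\in\mathbb Z$) with codes $b_j\in B\subset\Sigma$, the semiconjugacy $h^{-1}$, the identification of $\bar\alpha$ on $Q$ with the adding machine, and the translation $f(G_j)=G_{j+1}$. For the $k$-th block I would choose a code word $n_k\in\{0,1\}^k$ and form $\eta_k:=\alpha\circ\tau_{n_k}$ as in Lemma~\ref{lm}; by parts (ii)--(iii) the blown-up interval with code $n_k\underline0$ becomes a periodic interval for the interval realisation $\hat\eta_k\in C(I)$ of $\eta_k$, while $\hat\eta_k$ differs from $f$ only on the piece of the simple set $M(\tilde\omega)$ coded by the cylinder $\Sigma_{n_k}$, a set whose diameter tends to $0$ with $k$. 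Inside the $k$-th block I would then (a) use maps $\hat g_{k,1},\dots,\hat g_{k,\ell_k}$, each within $\epsilon_k$ of $f$ with $\epsilon_k\to 0$, whose composition carries a fixed reference blown-up interval once around its periodic circuit while performing one three-lap fold onto itself, and repeat this circuit $i_k$ times; then (b) terminate the block with a map that collapses the folded region back onto a single point of $\tilde\omega$, and resume $f$ for a buffer stretch. After step (a) the reference interval carries a $3^{i_k}$-horseshoe whose branch gaps are bounded below by a fixed $\varepsilon_0>0$ (the reference interval being the same in every block), so there is an $(n,\varepsilon_0/2,S)$-separated set of cardinality at least $3^{i_k}$, where $S$ is the increasing enumeration of the "circuit times'' of all blocks. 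Choosing $\ell_k$, $i_k$ and the buffer lengths so that the relevant ratios tend to $\log 3$ gives $h_S(f_{1,\infty})\ge\log 3$, hence (2); controlling $\sup_x\varrho(f_n(x),f(x))$ block-by-block with $\epsilon_k\to 0$ gives the uniform convergence (1). As in Lemma~\ref{NonUnif} one must keep verifying surjectivity of every map along the way, which is routine but necessary.

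For (3) the point is that the terminal collapse of each block re-synchronises trajectories: a point caught in the (shrinking) active region of block $k$ is spread out by at most the diameter of that region and is then thrown back onto $\tilde\omega$, after which it rides the same trajectory as the collapse image; a point outside the active region simply follows $f$, which—$f$ being non-LYC with $\tilde\omega$ its only infinite $\omega$-limit set—either moves onto $\tilde\omega$, where $f$ acts equicontinuously and the point will eventually be captured and synchronised, or converges to a periodic orbit disjoint from all the active regions. In the first case $\limsup_{n}\varrho(f_1^n x,f_1^n y)=0$ for the pair $(x,y)$—the finitely many bursts in which $x$ and $y$ were $\varepsilon_0$-separated during a single block are irrelevant to this limit superior—so $(x,y)$ is not a Li--Yorke pair; in the second case $\liminf_{n}\varrho(f_1^n x,f_1^n y)>0$, so again $(x,y)$ is not a Li--Yorke pair. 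Hence $(I,f_{1,\infty})$ has no Li--Yorke pair at all, in particular it is not LYC.

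The main obstacle—the genuinely delicate part—is making steps (a) and (b) compatible with uniform convergence. The horseshoes feeding (2) must produce separation at a \emph{fixed} spatial scale: a short computation with the definition of $h_S$ shows that if the scale tended to $0$ then every sequence entropy would vanish, since for each fixed $\varepsilon$ only finitely many blocks would contribute and $s_n^S(f_{1,\infty},\varepsilon)$ would stay bounded. So the three-lap fold realised over a circuit has to stretch a third of the fixed reference interval across all of it, and this amount of stretching must be distributed over the $\ell_k$ maps of the circuit so that each individual map stays within $\epsilon_k\to 0$ of $f$ while the composition is a genuine horseshoe with branch gaps bounded below, and so that the elapsed time needed to triple the separation stays proportional (not merely subordinate) to the gain $\log 3$ in the exponent. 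This is exactly where the non-autonomy must do work that Theorem~\ref{FSthm} forbids for a single interval map, and it is here that the placement of the blown-up intervals and the fine structure of the adding-machine order-reversals realised by $\tau_{n_k}$ have to be exploited. I expect the bulk of the technical labour to go into specifying these positions, verifying the covering relations and the surjectivity of all maps, and the book-keeping that reconciles the entropy estimate of (2) with the synchronisation argument of (3).
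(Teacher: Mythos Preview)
Your plan is essentially the paper's own proof: block structure modelled on Lemma~\ref{NonUnif}, perturbations $\hat\eta_k$ realising $\alpha\circ\tau_{n_k}$ and supported on a cylinder of shrinking diameter, a three-lap fold executed once per circuit, a terminal collapse map $\psi$ to kill scrambled pairs, and the sequence $S$ sampling the circuit return times to the fixed reference interval $G_0$. The non-LYC argument in the paper is exactly your dichotomy: inside $\bigcup_j G_j$ every trajectory is eventually synchronised by the collapses, and outside $\bigcup_j G_j$ the system is conjugate to the adding machine and hence distal.

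One remark on what you flag as the main obstacle. You anticipate having to \emph{distribute} the stretching of the three-lap fold over the $\ell_k$ maps of the circuit so that each individual map stays $\epsilon_k$-close to $f$. The paper sidesteps this entirely by first arranging (without loss of generality) that $f|_{G_j}$ is \emph{linear} onto $G_{j+1}$ for every $j$. Then the fold $\varphi$ is performed in a \emph{single} map, supported on the tiny interval $K^n_{p_n}\subset J_n$ (so $\|\varphi-f\|_\infty\le\mathrm{diam}\,f(J_n)\to 0$), and the subsequent $\eta_n$-iterates, being linear on the relevant $G_j$, transport the three folded pieces affinely around the periodic circuit until they land in $G_0$ at full scale $\varepsilon_0$. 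No gradual stretching is needed; the fixed-scale separation is produced by the geometry of the blow-up, not by the perturbation. With this device the ``bulk of the technical labour'' you anticipate largely evaporates, and the book-keeping reduces to choosing the repetition counts $a_n$ so that $(a_n-1)/\tilde a_n\to 1$.
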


\begin{proof}
Let $G_j$ be the sets defined in the proof of Theorem \ref{ThmLim}, let $|G|$ denote the length of the interval $G$ and put $\varepsilon_0 := |G_0|/3$. Without loss of generality we can assume that $G_0$ is located to the left of any other $G_j$ (so its code is $\underline{0}$). Since the intervals $G_j$ are pairwise disjoint and $f(G_j) = G_{j+1}$, we may assume that $f|_{G_j}$ is a linear and strictly monotone map onto $G_{j+1}$.

The construction of the nonautonomous system is, similarly to the construction in the proof of the Theorem \ref{NonUnif}, constructed by blocks. We start with an increasing sequence $K^1_0 \subset K^2_0 \subset K^3_0 \ldots \subset G_0$ of compact intervals. The center of every $K^j_0$ is the center $c_0$ of the interval $G_0$, the length $|K^0_1| > \varepsilon_0$ and $\lim_{n\rightarrow\infty} K^n_0 = G_0$. For any $j\in\mathbb{Z}$ and any $n\in\mathbb{N}$, let $K^n_j := f^j(K^n_0)$. Because $f$ is a linear map from $G_j$ onto $G_{j+1}$ we have again for any $j\in\mathbb{Z}$ and any $n\in\mathbb{N}$
\begin{multline*}
K^1_j \subset K^2_j \subset K^3_j \subset \ldots \subset G_j, \lim_{n\rightarrow\infty} K^j_n = G_j\\ \mbox{and } f\!:\ldots \mapsto K^n_{-2} \mapsto K^n_{-1} \mapsto K^n_0 \mapsto K^n_1 \mapsto \ldots
\end{multline*}

By Theorem \ref{ThmLim}, there are compact periodic intervals $\{J_i\}_{i\geq 1} \subset \mathcal{I}(\tilde{\omega})$ with codes $n_i$ such that the period of $J_i$ is $2^{k_i}$ (cf. Lemma \ref{lm}) and $\lim_{i\rightarrow\infty} |J_i| = 0$.

Let us define $\lambda_{n_i}: I\rightarrow I$ as an analog to the $\tau$ from Lemma \ref{lm}. On the interval $J_i$ it maps the underlying interval $G_j$ (linearly) onto another $G_l$ if and only if $\tau_{n_i}$ maps the respective codes one to another as well and it is linearly extended to the rest of the $J_i$. On the other intervals from the periodic orbit of $J_i$ it is defined as identity and then linearly extended to the whole $I$.

The interval $J_i$ contains intervals from the orbit of $G_0$ with codes $n_i\underline{1}$ (a predecessor of $G_0$) and $n_i\underline{0}$ (a follower of $G_0$), and $\lambda_{n_i}$ maps the follower onto the predecessor and so completes the periodic cycle. More precisely, let $\eta_i := f \circ \tau_{n_i}$. Then, for any $i \geq 1$, we have:
\begin{enumerate}
\item $\lim_{i\rightarrow\infty}||f-\eta_i|| = 0$
\item Intervals $G_j$ with codes $n_i\underline{0}$ and all corresponding subintervals $K^n_j$ are $\eta_i$-periodic with period $2^{k_i}$.
\item $\eta_i$-periodic orbit of $G_0$ and of any $K^n_0$ intersects $J_i$ at a single interval $G_{p_i}$ or $K^n_{p_i}$, resepectivelly, where $p_i = e(n_i)$. In particular, code of $G_{p_i}$ is $n_{k_i}\underline{0}$.
\item Every interval $G_j$ from the $f$-orbit of $G_0$ is $\eta_i$-periodic. Its period is either $2^{k_i}$ and belongs to an $\eta_i$-orbit of $G_0$, or it is $m\cdot 2^{k_i}$ for some integer $m>1$.
\end{enumerate}

All the above facts except the first one are direct consequences of the definition of $\eta$ and $\lambda$ and Lemma \ref{lm}. The first one is the consequence of the fact that the supremum of distances of $f$ and $\eta_i$ is the length of $f$-image (or $\eta_i$-image, the lengths are the same) of $J_i$, which is positive and converges to zero.

As a next step, we define the maps analogical to the ones from the proof of Theorem \ref{NonUnif}. Let $\varphi_{i,n}(x)=f(x)$ for $x\not\in\mbox{int}(K^n_{p_i})$ and let it be a three lap piecewise monotone map on the interior. In the case of $n=0$ we impose no further restrictions, in all the other cases it maps each of the three parts of $K^n_{p_i}$ obtained by dividing it by $K^{n-1}_{p_i}$ back onto the whole $K^n_{p_i}$. With these maps we can define the following sequence $g_{1,\infty}$ for some $i$ and $n$: $$g_{1,\infty} = \varphi_{i,n}\circ\lambda_i,\underbrace{\eta_i,\eta_i,\ldots,\eta_i}_{2^{k_i}-1\ \mbox{times}},\varphi_{i,n}\circ\lambda_i,\underbrace{\eta_i,\eta_i,\ldots,\eta_i}_{2^{k_i}-1\ \mbox{times}},\varphi_{i,n}\circ\lambda_i,\underbrace{\eta_i,\eta_i,\ldots,\eta_i}_{2^{k_i}-1\ \mbox{times}},\ldots$$
The $m$th iterate $g_1^m$ with $m < 2^{k_i}$ has 3 laps mapping $K^n_{p_i}$ onto $K^n_{p_i+m}$ and the others ($j\neq p_i$) $K^n_j$ linearly onto $K^n_{j+m}$ (so the perturbed part travels along the orbit of $K^n_{p_i}$). Similarly, for $2^{k_i} \leq m < 2\cdot2^{k_i}$, the $m$th iterate has $3^2$ laps on $K^n_{p_i}$, $3^3$ laps for $2\cdot2^{k_i} \leq m < 3\cdot2^{k_i}$ etc.

Let $q_i := 2^{k_i}-p_i$, i.e., $f^{q_i}(G_{p_i}) = G_0$ and let $R = \{r_m\}_m\in\mathbb{N}$ be a sequence defined as $r_m := q_i - 1 + m\cdot2^{k_i}$. The sequence represents the times when the perturbed part of the $g_{1,\infty}$ has values in $K^n_0$. Since $|K^1_0| > \varepsilon_0$ the laps have height greater than $\varepsilon_0$. Consequently, the topological sequence entropy $h_R(g_{1,\infty}) \geq \log 3$, since there is an $(\varepsilon_0,r_m)$-separated set of cardinality $3^k$

Let $\psi_{i,n} = f$ outside of the interior of $K_{p_i}^{n+1}$, let $\psi_{i,n}|_{K_{p_i}^n}$ be constant with value $c_{p_i+1}$ (the center of $G_{p_i}$) and let us extend it continuously and linearly on $K_{p_i}^{n+1}\setminus K_{p_i}^{n}$. The role of $\psi_{i,n}$ is to ``kill'' LY-scrambled sets. With $\psi_{i,n}$ we finally define the desired NDS. Let $\{a_n\}_{n=1}^\infty$ be an increasing sequence of positive integers and let $b_n := a_n\cdot 2^k_n +1$. Then we define $f_{1,\infty}$ as a sequence of blocks $B_n$ of length $b_n$ where: $$B_n := \varphi_{n,n}\circ\lambda_n, \underbrace{\eta_n,\eta_n, \ldots, \eta_n}_{2^{k_n}-1\ \mbox{times}}, \ldots, \varphi_{n,n}\circ\lambda_n, \underbrace{\eta_n,\eta_n, \ldots, \eta_n}_{2^{k_n}-1\ \mbox{times}}, \psi_{n,n}.$$
Since for any $n$ the trajectory of every $x\in K_{p_i}^n$ is eventually constant in the NDS $B_n,B_n,B_n\ldots$ we can clearly see that every $f_{1,\infty}$-trajectory of a point in any $G_j$ is eventually constant as well. As a consequence we can see that $f_{1\infty}$ has no LY-pairs as $\bigcup G_j$ is the only subset of $I$ where LY-pairs can appear because $f_{1,\infty}$ outside of the $\bigcup G_j$ is conjugate to the adding machine which means it is distal (i.e., infimum of distances of all iterations of two distinct points is positive).

Next, take a sequence $S := \{s_n\}_{n=1}^\infty$ where $$s_1,s_2,\ldots, s_{a_1} = q_1 - 1 + 2^{k_1}, q_1 - 1 + 2\cdot 2^{k_1}, \ldots, q_1 - 1 + a_1\cdot 2^{k_1}$$
and, taking $\tilde{a}_0 := 0$ and $\tilde{a}_k := a_1 + a_2 + \ldots + a_k$, $$s_{\tilde{a}_{n-1}+k} = s_{\tilde{a}_{n-1}} + q_n - 1 +k\cdot 2^{k_n}, \mbox{ where } 1 \leq k \leq \tilde{a}_{n-1}$$
If we assume that the sequence $\{a_n\}$ increases rapidly enough, then $h_S(f_{1,\infty}) \geq \log 3$. To look at this statement a bit closely, we can say that $f_{1,\infty}$ generates an $(\varepsilon_0,s_{\tilde{a}_n}))$-separated set of cardinality greater or equal to $3^{a_n-1}$ whence
$$h_S(f_{1,\infty}) \geq \limsup_{n\rightarrow\infty} \frac{a_n-1}{\tilde{a}_n}\log 3 = \limsup_{n\rightarrow\infty} \frac{a_n-1}{a_n}\log 3 = \log 3.$$

The uniform convergence is a consequence of the fact, that $\lim_{i \rightarrow \infty}|J_i| = 0$ and that all the maps $\lambda_i$, $\varphi_{i,n}$ and $\psi_{i,n}$ are different from $f$ only at the interval $J_i$.

\end{proof}

\section{Acknowledgement}
The author gratefully acknowledges valuable discussions, remarks and thorough proofreading by his supervisor Marta \v{S}tef\'{a}nkov\'{a}. The author is also thankful for the support of the grant SGS/18/2016.

\end{document}